\newcommand{\bbD}{{\mathbb{D}}}
\newcommand{\bbR}{{\mathbb{R}}}
\newcommand{\fre}{{\frak{e}}}
\newcommand{\calT}{{\mathcal T}}
\newcommand{\bdone}{{\boldsymbol{1}}}
\newcommand{\lb}{\label}
\newcommand{\f}{\frac}
\newcommand{\ti}{\tilde  }
\newcommand{\tr}{\text{\rm{Tr}}}
\newcommand{\ess}{\text{\rm{ess}}}
\newcommand{\ac}{\text{\rm{ac}}}
\newcommand{\supp}{\text{\rm{supp}}}
\newcommand{\bi}{\bibitem}
\newcommand{\beq}{\begin{equation}}
\newcommand{\eeq}{\end{equation}}
\newcommand{\ba}{\begin{align}}
\newcommand{\ea}{\end{align}}
\let\det=\undefined\DeclareMathOperator{\det}{det}
\newcounter{smalllist}
\numberwithin{equation}{section}
\newtheorem{theorem}{Theorem}[section]
\newtheorem*{p2.1}{Proposition 2.1}
\newtheorem{proposition}[theorem]{Proposition}
\newtheorem{lemma}[theorem]{Lemma}
\theoremstyle{definition}
\newtheorem{conjecture}[theorem]{Conjecture}
\theoremstyle{remark}
\newtheorem*{remark}{Remark}
\newtheorem*{remarks}{Remarks}
\newtheorem*{definition}{Definition}
\newcommand{\abs}[1]{\lvert#1\rvert}
\newcommand{\norm}[1]{\lVert#1\rVert}
\renewcommand{\MRhref}[2]{\href{http://www.ams.org/mathscinet-getitem?mr=#1}{#2}}
\renewcommand{\MR}[1]{}
\def\@strippedMR{}
\def\@scanforMR#1#2#3\endscan{%
   \ifx#1M\ifx#2R\def\@strippedMR{#3}%
   \else\def\@strippedMR{#1#2#3}%
   \fi\fi}
\renewcommand\MR[1]{\relax\ifhmode\unskip\spacefactor3000 \space\fi
   \@scanforMR#1\endscan
   \MRhref{\@strippedMR}{MR\@strippedMR}}
\begin{document}
\title{Regularity and the Ces\`aro--Nevai Class}
\author{Barry Simon}

\thanks{Mathematics 253-37, California Institute of Technology, Pasadena, CA 91125.
E-mail: bsimon@caltech.edu. Supported in part by NSF grants DMS--0140592
and DMS-0652919}

\thanks{To be submitted to J.\ Approx.\ Theory}

\date{November 5, 2007}
\keywords{Orthogonal polynomial, regular measure}
\subjclass[2000]{05E35, 47B39}

\begin{abstract} We consider OPRL and OPUC with measures regular in the sense of
Ullman--Stahl--Totik and prove consequences on the Jacobi parameters or Verblunsky
coefficients. For example, regularity on $[-2,2]$ implies $\lim_{N\to\infty}
N^{-1} [\sum_{n=1}^N (a_n-1)^2 + b_n^2] =0$.
\end{abstract}

\maketitle

\section{Introduction and Background} \lb{s1}

This paper concerns the general theory of orthogonal polynomials on the real line, OPRL
(see \cite{Szb,Chi,FrB,Rice}), and the unit circle, OPUC (see \cite{Szb,GBk,OPUC1,OPUC2}).
Ullman \cite{Ull} introduced the notion of regular measure on $[-2,2]$ (he used
$[-1,1]$; we use the normalization more common in the spectral theory literature): a
measure, $d\mu$, on $\bbR$ with
\begin{equation} \lb{1.1}
\supp(d\mu) = [-2,2]
\end{equation}
and $(\{a_n,b_n\}_{n=1}^\infty$ are the Jacobi parameters of $d\mu$)
\begin{equation} \lb{1.2}
\lim_{n\to\infty} (a_1 \dots a_n)^{1/n} =1
\end{equation}

Here we will look at the larger class with \eqref{1.1} replaced by
\begin{equation} \lb{1.3}
\sigma_\ess (d\mu) = [-2,2]
\end{equation}
(i.e., $\supp(d\mu)$ is $[-2,2]$ plus a countable set whose only limit points are a subset
of $\{\pm 2\}$).

Our goal is to explore what restrictions regularity places on the Jacobi parameters. At first
sight, one might think \eqref{1.2} is the only restriction but, in fact, the combination of
both \eqref{1.2} and \eqref{1.3} is quite strong. This should not be unexpected. After all,
it is well known (going back at least to Nevai \cite{Nev79}; see also \cite[Sect.~13.3]{OPUC2})
that \eqref{1.1} plus $\liminf (a_1 \dots a_n) >0$ implies
\begin{equation} \lb{1.4}
\sum_{n=1}^\infty (a_n-1)^2 + b_n^2 <\infty
\end{equation}

One can use variational principles to deduce some restrictions on the $a$'s and $b$'s. For
example, picking $\varphi_n$ to be the vector in $\ell^2 (\{1,2,\dots \})$
\begin{equation} \lb{1.5}
\varphi_{n,j} = \begin{cases}
\f{1}{\sqrt{n}} & j\leq n \\
0 & j\geq n+1
\end{cases}
\end{equation}
and using the Jacobi matrix
\begin{equation} \lb{1.6}
J=
\begin{pmatrix}
b_1 & a_1 & 0 & 0 & \cdots \\
a_1 & b_2 & a_2 & 0 & \cdots \\
0 & a_2 & b_3 & a_3 & \cdots \\
\vdots & \vdots & \vdots & \vdots & \ddots
\end{pmatrix}
\end{equation}
one sees, for example, that \eqref{1.3} implies (see also Theorem~\ref{T1.2} below)
\begin{align}
b_n &\equiv 0 \Rightarrow \limsup_{n\to\infty} \f{1}{n} \sum_{j=1}^{n-1} a_j \leq 1  \lb{1.6a} \\
a_n &\equiv 1 \Rightarrow \lim_{n\to\infty} \f{1}{n} \sum_{j=1}^n b_j =0 \lb{1.6b}
\end{align}

In fact, we will prove much more:

\begin{theorem}\lb{T1.1} If $\mu$ obeys \eqref{1.3} and \eqref{1.2}, then
\begin{equation} \lb{1.7}
\lim_{n\to\infty} \f{1}{n} \sum_{j=1}^n (\abs{a_j-1} + \abs{b_j}) =0
\end{equation}
\end{theorem}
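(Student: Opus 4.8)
The plan is to reduce the $L^1$-Cesàro statement \eqref{1.7} to the two $L^2$-Cesàro statements $\frac{1}{n}\sum_{j=1}^n(a_j-1)^2\to0$ and $\frac{1}{n}\sum_{j=1}^n b_j^2\to0$, since by Cauchy--Schwarz $\frac{1}{n}\sum|a_j-1|\le(\frac{1}{n}\sum(a_j-1)^2)^{1/2}$ and likewise for the $b$'s. Write $A_n=\frac{1}{n}\sum_{j=1}^n a_j^2$, $B_n=\frac{1}{n}\sum_{j=1}^n b_j^2$, and $\bar a_n=\frac{1}{n}\sum_{j=1}^n a_j$. Because the limit points of $\supp(d\mu)$ lie in $\{\pm2\}$, the matrix $J$ of \eqref{1.6} is bounded, so the $a_j,b_j$ are bounded and these averages are finite. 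I will show $A_n\to1$, $B_n\to0$, and $\bar a_n\to1$; then $\frac{1}{n}\sum(a_j-1)^2=A_n-2\bar a_n+1\to0$ while $\frac{1}{n}\sum b_j^2=B_n\to0$, which yields \eqref{1.7}.

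The first ingredient concerns the second moment of the truncated matrix $J_n$ (the top-left $n\times n$ block of \eqref{1.6}). Its eigenvalues are the zeros of the orthonormal polynomial $p_n$, so the normalized eigenvalue counting measure $\nu_n$ is the normalized zero counting measure. By the Stahl--Totik theory, regularity \eqref{1.2} together with \eqref{1.3} forces $\nu_n$ to converge weakly to the equilibrium (arcsine) measure $d\nu_{\mathrm{eq}}=\frac{dx}{\pi\sqrt{4-x^2}}$ of $[-2,2]$; since $\norm{J_n}\le\norm{J}<\infty$ uniformly, all the $\nu_n$ live in a fixed compact interval, so weak convergence upgrades to convergence of moments. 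Using $\int x^2\,d\nu_{\mathrm{eq}}=2$ and $\frac{1}{n}\tr(J_n^2)=B_n+2A_n-\frac{2a_n^2}{n}=B_n+2A_n+o(1)$, I obtain $B_n+2A_n\to2$, and in particular $\limsup_n(B_n+2A_n)\le2$.

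The second ingredient is regularity in Jensen form: $L_n:=\frac{1}{n}\sum_{j=1}^n\log a_j\to0$, which is precisely \eqref{1.2}. Concavity of $\log$ gives $2L_n=\frac{1}{n}\sum\log a_j^2\le\log A_n$ and $L_n\le\log\bar a_n$, hence $\liminf_n A_n\ge1$ and $\liminf_n\bar a_n\ge1$. Since $B_n\ge0$, the bound $\limsup(B_n+2A_n)\le2$ gives $\limsup A_n\le1$, so $A_n\to1$, and then $B_n=(B_n+2A_n)-2A_n\to0$. Finally Cauchy--Schwarz gives $\bar a_n\le A_n^{1/2}\to1$, so together with $\liminf\bar a_n\ge1$ we conclude $\bar a_n\to1$. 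Assembling $A_n\to1$, $\bar a_n\to1$, $B_n\to0$ proves \eqref{1.7}.

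The only nonelementary step is the second-moment asymptotics $\frac{1}{n}\tr(J_n^2)\to2$, and this is where I expect the real work to be: one must pass from the defining condition \eqref{1.2} to convergence of the density of states and control the finitely many zeros of $p_n$ that track point masses near $\pm2$; everything afterward is convexity and Cauchy--Schwarz. It is worth emphasizing that \eqref{1.3} and \eqref{1.2} enter in genuinely different ways: \eqref{1.3}, through the equilibrium measure, supplies the upper bound $\limsup A_n\le1$ on the spread of the parameters, whereas \eqref{1.2}, through Jensen, supplies the matching lower bound $\liminf A_n\ge1$, and neither alone suffices.
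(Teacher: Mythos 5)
Your proposal is correct and follows essentially the same route as the paper: the second-moment identity $\f{1}{n}\tr(J_n^2)\to\int x^2\,d\rho_{[-2,2]}=2$ via Stahl--Totik convergence of the zero counting measure (the paper's Proposition~\ref{P2.2}), combined with concavity of $\log$ applied to \eqref{1.2} to get the matching lower bounds (the paper's Lemma~\ref{L2.1}), and Cauchy--Schwarz to pass from the $L^2$ to the $L^1$ Ces\`aro statement (the paper's \eqref{1.10}). The only cosmetic difference is that you run Jensen on both $\log a_j$ and $\log a_j^2$ to get $A_n\to1$ and $\bar a_n\to1$ separately, where the paper uses $\log x\le x-1$ once and expands $(a_n-1)^2$ directly.
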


Following the terminology for the OPUC analog of this in Golinskii--Khrushchev \cite{KhGo}, we
call \eqref{1.7} the Ces\`aro--Nevai condition and $\{a_j,b_j\}_{j=1}^\infty$ obeying \eqref{1.7}
the Ces\`aro--Nevai class. It, of course, contains the Nevai class (named after \cite{Nev79})
where $\abs{a_j-1} + \abs{b_j}\to 0$.

Noting that $\supp(d\mu)$ bounded implies
\begin{equation} \lb{1.8}
A=\sup_n (\abs{a_n-1} + \abs{b_n}) <\infty
\end{equation}
and that, by the Schwarz inequality,
\begin{align}
\biggl( \f{1}{n}\sum_{j=1}^n \, \abs{a_j-1} + \abs{b_j}\biggr)^2
&\leq \f{2}{n} \sum_{j=1}^n (a_j-1)^2 + (b_j)^2 \notag \\
&\leq 2A\, \f{1}{n} \sum_{j=1}^n (\abs{a_j-1} + \abs{b_j}) \lb{1.9}
\end{align}
we see
\begin{equation} \lb{1.10}
\eqref{1.7} \Leftrightarrow \f{1}{n} \sum_{j=1}^n (a_j-1)^2 + (b_j)^2 \to 0
\end{equation}

While Theorem~\ref{T1.1} has a lot of information, it is not the whole story.
For example, if $a_n\equiv 1$, then by the same variational principle, for any
$j_k\to\infty$,
\[
\f{1}{n} \sum_{j_k}^{j_k+n} b_j\to 0
\]
It would be interesting to see what else can be said.

A major theme we explore is what can be said if $[-2,2]$ is replaced by a more general
set, $\fre$. In Section~\ref{s5}, we define Nevai and CN classes for finite gap sets $\fre$
and state a general conjecture which we prove in the special case where $d$ has $p$
components, each of harmonic measure $1/p$, that is, the periodic case with all gaps open.

In Section~\ref{s3}, we extend Theorem~\ref{T1.1} to the matrix OPRL case on $[-2,2]$,
and in Section~\ref{s6}, we use this and ideas of Damanik--Killip--Simon \cite{DKS2007}
to obtain the result in the last paragraph. Section~\ref{s4} has a brief discussion of
OPUC.

We should close by noting an earlier result of M\'at\'e--Nevai--Totik \cite{MNT87} 
related to---but neither stronger nor weaker than---Theorem~\ref{T1.2}: 

\begin{theorem}[\cite{MNT87}]\lb{T1.2} Suppose $\mu$ obeys \eqref{1.1} and $a_n\to 1$ as 
$n\to\infty$. Then $b_n\to 0$ as $n\to\infty$. 
\end{theorem}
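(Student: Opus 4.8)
The plan is to argue by contradiction via right limits, reducing everything to a rigidity statement about two-sided Jacobi matrices with constant off-diagonal. First note that under \eqref{1.1} the support of $d\mu$ is the spectrum of the half-line Jacobi matrix $J$ of \eqref{1.6}, so $\sigma(J)=\sigma_\ess(J)=[-2,2]$ and in particular $\|J\|=2$; this bounds $\{a_n\}$ and $\{b_n\}$. Suppose $b_n\not\to 0$. Then there are $n_k\to\infty$ and $\beta\neq 0$ with $b_{n_k}\to\beta$, and we may assume $\beta>0$ (the case $\beta<0$ is symmetric, using the lower band edge). Since $a_n\to 1$ and $\{b_n\}$ is bounded, a diagonal/compactness argument extracts a further subsequence along which every shifted parameter $a_{n_k+m},b_{n_k+m}$ converges; the limit is a bounded whole-line Jacobi matrix $J_\infty$ on $\ell^2(\bbZ)$ — a right limit of $J$ — whose off-diagonal entries are identically $1$ (because $a_n\to 1$) and whose central diagonal entry is $b_0^\infty=\beta>0$.

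The key input is the right-limit spectral inclusion $\sigma(J_\infty)\subseteq\sigma_\ess(J)=[-2,2]$: given an approximate eigenvector of $J_\infty$ at energy $E$, one localizes it and translates it back into the tail of $J$ to produce an approximate eigenvector of $J$, witnessing $E\in\sigma_\ess(J)$. Thus the entire theorem reduces to the following Lemma: \emph{a bounded whole-line Jacobi matrix with all off-diagonal entries equal to $1$ and $\sigma\subseteq[-2,2]$ has identically vanishing diagonal.} Applying it to $J_\infty$ contradicts $b_0^\infty=\beta>0$, and hence forces $b_n\to 0$.

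To prove the Lemma I would use oscillation theory at the upper band edge. Because the off-diagonal entries are positive, the generalized eigenfunction of $J_\infty$ at the top $E_+=\sup\sigma(J_\infty)\le 2$ of its spectrum may be taken strictly positive (a Perron--Frobenius / Allegretto--Piepenbrink statement); equivalently, for every $E\ge E_+$ there is a solution $u>0$ of $u_{n+1}+u_{n-1}+b_nu_n=Eu_n$. Taking $E=2$ and writing $r_n=u_{n+1}/u_n>0$ turns this into the Riccati recursion $r_n+1/r_{n-1}=2-b_n$. At $n=0$ the right-hand side is $2-\beta<2$, which by AM--GM forces the ratios to contract there, whereas at the sites with $b=0$ the recursion is the autonomous map $r\mapsto 2-1/r$ with the parabolic fixed point $r=1$; the stable behaviour needed as $n\to-\infty$ and the stable behaviour needed as $n\to+\infty$ cannot both be met once a strictly positive $b_0$ is inserted — this is precisely the computation showing that a single positive bump pushes $\sup\sigma$ strictly above $2$. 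The main obstacle is to run this global matching when $b_n$ is merely bounded rather than compactly supported, so that the Riccati dynamics are no longer a single autonomous one-dimensional map. I expect to resolve it either by a perturbation-determinant/Green's-function estimate as $E\downarrow 2$ — where the free resolvent entries blow up like $(\lambda-\lambda^{-1})^{-1}$ with $E=\lambda+\lambda^{-1}$, so that a nonzero diagonal forces a zero of the perturbation determinant, i.e. an eigenvalue above $2$ — after passing to finite-rank truncations, or by a direct subsolution/monotonicity comparison for the Riccati variable. This edge analysis for non-compact diagonal perturbations is the crux of the whole argument.
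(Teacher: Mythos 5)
Your reduction is exactly the one the paper uses (Remark~3 after the theorem statement): pass to a right limit $J_\infty$ of $J$, note that $a_n\to 1$ forces the off-diagonal of $J_\infty$ to be identically $1$ and that $\sigma(J_\infty)\subseteq\sigma_\ess(J)=[-2,2]$ (the paper cites Last--Simon \cite{S304} for this inclusion), and then invoke the rigidity statement that a whole-line Jacobi matrix with $a_n\equiv 1$ and spectrum in $[-2,2]$ has $b_n\equiv 0$. Up to that point your argument is correct and is the paper's argument.

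The genuine gap is in your key Lemma, which is precisely the step the paper does \emph{not} reprove but cites: it is the result of Killip--Simon \cite{KS} and Damanik--Hundertmark--Killip--Simon \cite{S284,S285,S291}, proved there by sum rules / variational estimates for potentials of indefinite sign. Your proposed Riccati--oscillation proof does not close, and you say so yourself (``the main obstacle\dots is the crux of the whole argument''). Worse, the heuristic it rests on --- ``a single positive bump pushes $\sup\sigma$ strictly above $2$'' --- is false once the remaining diagonal entries are allowed to be negative: for $b_{n_0}=\beta>0$ and $b_n=-M$ elsewhere, the rank-one computation gives top eigenvalue $\sqrt{4+(M+\beta)^2}-M$, which is below $2$ for $M$ large. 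So no analysis localized at the site of the bump and at the single band edge $E=2$ can prove the Lemma; one must use both constraints $\sup\sigma\le 2$ and $\inf\sigma\ge -2$ globally, which is exactly what the sum-rule machinery of \cite{S285,S291} accomplishes. Either cite that result (as the paper does) or supply a genuinely two-sided argument; as written, the crux of your proof is missing.
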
 

\begin{remarks} 1. $\mu$ need only obey \eqref{1.3} as seen by Remark~3 below. 

\smallskip 
2. This strengthens \eqref{1.6b}. There is no similar strengthening of \eqref{1.6a}. 

\smallskip 
3. One way of seeing this is as follows: By Last--Simon \cite{S304}, any right limit of 
a $J$ obeying \eqref{1.3} has $\sigma (J_r)\subset [-2,2]$ and has $a_n\equiv 1$. By a 
result of Killip--Simon \cite{KS} (see also \cite{S284,S285,S291}), any such $J_r$ has 
$b_n\equiv 0$. By compactness, $b_n\to 0$ for the original $J$. 
\end{remarks}

\medskip
It is a pleasure to thank Paul Nevai and Christian Remling for useful correspondence.

\section{OPRL on $[-2,2]$} \lb{s2}

Our goal here is to prove Theorem~\ref{T1.1}.

\begin{lemma}\lb{L2.1} Suppose $a_n\in (0,\infty)$ is a sequence so that
\begin{alignat}{2}
&\text{\rm{(i)}} \qquad && \liminf_{N\to\infty}\, (a_1 \dots a_N)^{1/N} \geq 1 \lb{2.1} \\
&\text{\rm{(ii)}} \qquad && \limsup_{N\to\infty}\, \f{1}{N}\sum_{n=1}^N a_n^2 \leq 1 \lb{2.2}
\end{alignat}
Then, as $N\to\infty$,
\begin{align}
& \f{1}{N} \sum_{n=1}^N a_n\to 1 \qquad\qquad \f{1}{N} \sum_{n=1}^N a_n^2 \to 1 \lb{2.3} \\
& \f{1}{N} \sum_{n=1}^N (a_n-1)^2 \to 0 \lb{2.4}
\end{align}
\end{lemma}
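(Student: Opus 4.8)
The plan is to connect the three means---the geometric mean $G_N=(a_1\cdots a_N)^{1/N}$, the arithmetic mean $A_N=\frac1N\sum_{n=1}^N a_n$, and the quadratic mean $Q_N=\frac1N\sum_{n=1}^N a_n^2$---through a single pointwise inequality and then sandwich them using hypotheses (i) and (ii). The key observation is that for all $x>0$,
\[
\log x \leq \tfrac12 (x^2 - 1),
\]
which holds because the difference $\tfrac12(x^2-1)-\log x$ is convex with minimum value $0$ attained at $x=1$. Averaging this inequality over $n=1,\dots,N$ yields
\[
\log G_N = \frac1N \sum_{n=1}^N \log a_n \leq \tfrac12 (Q_N - 1).
\]
This single estimate links the quantity controlled from below by \eqref{2.1} to the quantity controlled from above by \eqref{2.2}.

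First I would extract the convergence of $G_N$ and $Q_N$. Hypothesis (i) gives $\liminf \log G_N \geq 0$, and hypothesis (ii) gives $\limsup \tfrac12(Q_N-1)\leq 0$; combined with the displayed inequality, these force $\log G_N\to 0$, i.e.\ $G_N\to 1$. Rewriting the same inequality as $Q_N \geq 1 + 2\log G_N$ then gives $\liminf Q_N \geq 1$, which together with (ii) yields $Q_N \to 1$, the second limit in \eqref{2.3}.

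Next I would pin down the arithmetic mean by bracketing it between the other two via the standard chain $G_N \leq A_N \leq \sqrt{Q_N}$. The AM--GM inequality gives $A_N \geq G_N$, so $\liminf A_N \geq 1$, while Cauchy--Schwarz (against the constant vector) gives $A_N \leq \sqrt{Q_N}$, so $\limsup A_N \leq 1$; hence $A_N \to 1$, the first limit in \eqref{2.3}. Finally, \eqref{2.4} is immediate by expanding the square:
\[
\frac1N \sum_{n=1}^N (a_n-1)^2 = Q_N - 2 A_N + 1 \to 1 - 2 + 1 = 0.
\]

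The only genuinely non-routine step is locating the inequality $\log x \leq \tfrac12(x^2-1)$, which is precisely what couples the logarithmic (geometric-mean) hypothesis to the quadratic-mean hypothesis; everything afterward is a sandwich argument. I expect the main subtlety to be purely organizational: one must first establish $G_N\to 1$ and only then use it to pin $Q_N$ and $A_N$ from below, since the liminf/limsup bookkeeping does not close up if the order of the three limits is scrambled.
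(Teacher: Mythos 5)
Your proof is correct, and it is a close cousin of the paper's argument, though organized differently. The paper applies the tangent-line inequality $\log x \le x-1$ directly to the $a_n$, so that \eqref{2.1} immediately yields $\liminf_{N} \frac{1}{N}\sum_{n=1}^N a_n \ge 1$; expanding $(a_n-1)^2 = a_n^2 - 2a_n + 1$ against \eqref{2.2} then gives \eqref{2.4} in a single line, and both limits in \eqref{2.3} are deduced afterward (the first via Cauchy--Schwarz applied to $\lvert a_n-1\rvert$). You instead apply the same tangent-line trick to $a_n^2$ (your inequality $\log x \le \frac12(x^2-1)$ is exactly $\log y \le y-1$ with $y=x^2$), which pins down the quadratic mean $Q_N$ first, and you then need the extra chain $G_N \le A_N \le \sqrt{Q_N}$ (AM--GM plus Cauchy--Schwarz) to control the arithmetic mean before \eqref{2.4} falls out. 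Both routes are valid; the paper's ordering is slightly leaner, since bounding $\liminf A_N$ directly lets \eqref{2.4} be read off from $\limsup\,(Q_N - 2A_N + 1) \le 1 - 2 + 1 = 0$ without first proving that any individual mean converges. Importantly, like the paper's proof, yours uses only the one-sided inequality in \eqref{2.1} (you conclude $G_N \to 1$ as a byproduct rather than assuming it), so it remains compatible with the later remark in Section~\ref{s3} that the type~1 case of Theorem~\ref{T3.1} needs only an inequality in \eqref{2.1}.
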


\begin{proof} By concavity of $\log x$ for all $x\in (0,\infty)$,
\[
\log x \leq x-1
\]
so \eqref{2.1} implies
\[
\liminf_{N\to\infty} \f{1}{N} \sum_{n=1}^N a_n \geq 1 + \liminf_{N\to\infty} \,
\log (a_1 \dots a_N)^{1/N} \geq 1
\]
Thus,
\[
\limsup \f{1}{N} \sum_{n=1}^N (a_n-1)^2 \leq 1-2+1 =0
\]
so \eqref{2.4} holds.

By the Schwarz inequality,
\[
\f{1}{N} \sum_{n=1}^N\, \abs{a_n-1} \leq \biggl[ \, \f{1}{N} \sum_{n=1}^N (a_n-1)^2\biggr]^{1/2}\to 0
\]
which implies the first limit in \eqref{2.3}. \eqref{2.4} and that limit imply \eqref{2.3}.
\end{proof}

\begin{proposition}\lb{P2.2} Let $\{a_n, b_n\}_{n=1}^\infty$ be the Jacobi parameters for a regular
measure with $\sigma_\ess (J)=[-2,2]$. Then
\begin{equation} \lb{2.5}
\f{1}{N} \biggl[ 2 \sum_{n=1}^{N-1} a_n^2 + \sum_{n=1}^N b_n^2 \biggr] \to 2
\end{equation}
as $N\to\infty$.
\end{proposition}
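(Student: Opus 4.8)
The plan is to read the left side of \eqref{2.5} as a normalized second moment of the eigenvalue counting measure of the truncated Jacobi matrix, and then to feed in regularity through the distribution of zeros. First I would let $J_N$ be the $N\times N$ top-left corner of $J$ in \eqref{1.6}. Computing the diagonal of $J_N^2$ gives
\[
\tr(J_N^2) = \sum_{n=1}^N b_n^2 + 2\sum_{n=1}^{N-1} a_n^2,
\]
so the bracket in \eqref{2.5} is exactly $\tr(J_N^2)$, and the left side of \eqref{2.5} equals $\f1N \tr(J_N^2)=\int x^2\,d\nu_N(x)$, where $d\nu_N=\f1N\sum_{j=1}^N\delta_{\lambda_j}$ is the normalized counting measure of the eigenvalues $\lambda_1,\dots,\lambda_N$ of $J_N$ (equivalently, of the zeros of the orthonormal polynomial $p_N$).

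Next I would set up the potential theory. Since $\supp(d\mu)$ is $[-2,2]$ together with a countable set accumulating only at $\pm 2$, it is contained in a fixed compact interval $I\supseteq[-2,2]$; as the $\lambda_j$ lie in the convex hull of $\supp(d\mu)$, every $d\nu_N$ is supported in $I$. The extra points of $\supp(d\mu)$ form a countable, hence polar, set, so they change neither the capacity nor the equilibrium measure: $\mathrm{cap}(\supp d\mu)=\mathrm{cap}([-2,2])=1$, and the equilibrium measure of $\supp(d\mu)$ is the arcsine measure $d\rho=dx/(\pi\sqrt{4-x^2})$ of $[-2,2]$. The hypothesis that $d\mu$ is regular is precisely \eqref{1.2}, which therefore says $d\mu$ is regular for its support in the Ullman--Stahl--Totik sense; the Stahl--Totik theorem on the distribution of zeros of regular measures then gives $d\nu_N\to d\rho$ weakly.

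Because all $d\nu_N$ and $d\rho$ are carried by the fixed compact $I$ and $x\mapsto x^2$ is continuous, weak convergence upgrades to convergence of second moments,
\[
\f1N \tr(J_N^2)=\int x^2\,d\nu_N(x)\longrightarrow \int x^2\,d\rho(x).
\]
Finally, the substitution $x=2\cos\theta$ sends $d\rho$ to $\pi^{-1}\,d\theta$ on $[0,\pi]$, so $\int x^2\,d\rho=\pi^{-1}\int_0^\pi 4\cos^2\theta\,d\theta=2$, which is \eqref{2.5}.

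The main obstacle is the second step: the passage from regularity to the weak convergence $d\nu_N\to d\rho$. Two points need care. First, one must rule out a positive fraction of the eigenvalues of $J_N$ escaping toward or past the edges $\pm2$; this is handled by the uniform confinement of the $\lambda_j$ to the bounded convex hull of $\supp(d\mu)$ together with the fact that the point masses outside $[-2,2]$ form a polar set carrying zero equilibrium mass. Second, one must be certain the limiting object is the arcsine measure of $[-2,2]$ and not that of the larger set $\supp(d\mu)$; this is exactly the insensitivity of capacity and equilibrium measure to polar sets. Once the weak convergence is secured, the trace identity and the moment computation are routine.
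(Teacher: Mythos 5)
Your proposal is correct and follows essentially the same route as the paper: identify the bracket as $\tr(J_{N;F}^2)$, equate the eigenvalues of the truncation with the zeros of $p_N$, invoke the Stahl--Totik zero-distribution theorem for regular measures to get weak convergence to the equilibrium measure of $[-2,2]$, and compute $\int x^2\,d\rho_{[-2,2]}=2$ via $x=2\cos\theta$. The additional care you take with the polar countable set outside $[-2,2]$ and with upgrading weak convergence to convergence of second moments is exactly the detail the paper leaves implicit.
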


\begin{proof} Let $\{x_j^{(N)}\}_{j=1}^N$ be the zeros of the OPRL $p_N(x)$ associated to
the Jacobi parameters. Let $d\rho_{[-2,2]}$ be the equilibrium measures for $[-2,2]$
(see \cite{Land,Ran,EqMC} for potential theory notions). Since regularity implies that
the density of zeros converges to $d\rho_{[-2,2]}$ (see \cite{StT,EqMC}), we have
\begin{equation} \lb{2.6}
\f{1}{N} \sum_{n=1}^N (x_j^{(N)})^2 \to \int x^2 d\rho_{[-2,2]}(x)
\end{equation}
Since $\{x_j^{(N)}\}_{j=1}^N$ are the eigenvalues of the finite Jacobi matrix
\begin{equation} \lb{2.7}
J_{N;F} = \begin{pmatrix}
b_1 & a_1 \\
a_1 & b_2 & \ddots \\
{} & \ddots & \ddots & \ddots \\
{} & {} & \ddots & b_{N-1} & a_{N-1} \\
{} & {} & {} & a_{N-1} & b_N\,
\end{pmatrix}
\end{equation}
we have that
\begin{align}
\text{LHS of \eqref{2.6}} &= \f{1}{N} \tr (J_{N;F}^2) \notag \\
&= \f{1}{N} \biggl[\, \sum_{n=1}^N b_n^2 + 2 \sum_{n=1}^{N-1} a_n^2 \biggr] \lb{2.8}
\end{align}

Thus \eqref{2.5} is equivalent to
\begin{equation} \lb{2.9}
\int x^2 d\rho_{[-2,2]}(x)=2
\end{equation}

This can be seen either by using the explicit formula for $d\rho_{[-2,2]}$ (and $\int_0^\pi
(2\cos\theta)^2 \f{d\theta}{\pi} =2$) or by considering the special case $a_n\equiv 1$,
$b_n\equiv 0$ since the limit in \eqref{2.5} is the same for all regular $J$'s.
\end{proof}

\begin{proof}[Proof of Theorem~\ref{T1.1}] By regularity,
\begin{equation} \lb{2.10}
\liminf_{N\to\infty} \, (a_1 \dots a_N)^{1/N} =1
\end{equation}
and by Proposition~\ref{P2.2} and
\begin{equation} \lb{2.11}
\limsup\, \f{1}{N} \sum_{n=1}^{N-1} a_n^2 \leq 1
\end{equation}

By Lemma~\ref{L2.1}, we have \eqref{2.4}, and this and \eqref{2.5} imply
\begin{equation} \lb{2.12}
\f{1}{N} \sum_{n=1}^N b_n^2 \to 0
\end{equation}
By\eqref{1.10}, we get \eqref{1.7}.
\end{proof}

\section{MOPRL on $[-2,2]$} \lb{s3}

In this section, both for its own sake and because of the application in Section~\ref{s6}, we
want to consider matrix-valued measures for $[-2,2]$. Our reference for the associated OPRL
will be \cite{DPS2007} which discusses regular measures. $\ell$ is fixed and finite, and we
have a block Jacobi matrix of the form
\begin{equation}\lb{3.1}
J=
\begin{pmatrix}
B_1 & A_1 & 0 &  \cdots \\
A_1^\dagger & B_2 & A_2 &  \cdots \\
0 & A_2^\dagger & B_3 &  \cdots \\
\vdots & \vdots & \vdots &  \ddots
\end{pmatrix}
\end{equation}
where $A_j$ and $B_j$ are $\ell\times\ell$ matrices and $^\dagger$ is Hermitian conjugate. One
requires each $A_j$ is nonsingular.

Two sets of Jacobi parameters, $\{A_j,B_j\}_{j=1}^\infty$ and $\{\ti A_j, \ti B_j\}_{j=1}^\infty$,
are called equivalent if there exist $\ell\times\ell$ unitaries, $u_1 \equiv\bdone, u_2, u_3, \dots$
so that
\begin{equation}\lb{3.2}
\ti B_j = u_j^\dagger B_j u_j \qquad
\ti A_j = u_j^\dagger A_j u_{j+1}
\end{equation}
It is known (see \cite[Thm.~2.11]{DPS2007}) that there is a one-one correspondence between
nontrivial $\ell\times\ell$ matrix-valued measures, $d\mu$, (with nontriviality suitably
defined) and equivalence classes of Jacobi parameters.

$\{A_j,B_j\}_{j=1}^\infty$ is called type~1 (resp.\ type~3) if each $A_j$ is positive (resp.\
$A_j$ is lower triangular and positive on diagonal). Moreover (\cite[Thm.~2.8]{DPS2007}), each
equivalence class has exactly one representative of type~1 and one of type~3. An $\ell\times
\ell$ matrix-valued measure is called regular (\cite[Ch.~5]{DPS2007}) for $[-2,2]$ if and only if
\begin{equation}\lb{3.3}
\sigma_\ess (d\mu) = [-2,2]
\end{equation}
and
\begin{equation}\lb{3.4}
\biggl[\, \prod_{n=1}^N \, \abs{\det(A_n)}\biggr]^{1/N} \to 1
\end{equation}
Our basic result for such MOPRL is:

\begin{theorem}\lb{T3.1} If $\{A_n,B_n\}_{n=1}^\infty$ are the Jacobi parameters for an $\ell\times
\ell$ matrix-valued measure which is regular for $[-2,2]$ and are either of type~1 or type~3, then
\begin{equation}\lb{3.5}
\f{1}{N}\sum_{n=1}^N \, \norm{A_n-\bdone} + \norm{B_n} \to 1
\end{equation}
\end{theorem}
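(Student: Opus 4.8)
The plan is to follow the scalar proof of Theorem~\ref{T1.1} verbatim at the structural level, replacing ordinary traces of powers of the finite Jacobi matrix by block traces, and replacing the geometric-mean hypothesis on the $a_n$ by the corresponding hypothesis on the singular values of the $A_n$. First I would prove the block analogue of Proposition~\ref{P2.2}. Let $J_{N;F}$ be the top-left $N\ell\times N\ell$ truncation of $J$. Since $d\mu$ is regular for $[-2,2]$, the normalized eigenvalue counting measure of $J_{N;F}$ converges to the equilibrium measure $d\rho_{[-2,2]}$ (this is the matrix regularity input, cf.\ \cite[Ch.~5]{DPS2007}), so that
\[
\f{1}{N\ell}\tr(J_{N;F}^2)\to\int x^2\,d\rho_{[-2,2]}(x)=2.
\]
Expanding the block trace exactly as in \eqref{2.8} gives $\tr(J_{N;F}^2)=\sum_{n=1}^N\tr(B_n^2)+2\sum_{n=1}^{N-1}\tr(A_n^\dagger A_n)$, whence
\[
\f{1}{N}\biggl[\,2\sum_{n=1}^{N-1}\tr(A_n^\dagger A_n)+\sum_{n=1}^N\tr(B_n^2)\,\biggr]\to 2\ell .
\]

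Next I would feed the singular values into Lemma~\ref{L2.1}. Writing $s_1^{(n)},\dots,s_\ell^{(n)}$ for the singular values of $A_n$, one has $\tr(A_n^\dagger A_n)=\sum_i(s_i^{(n)})^2$ and $\log\abs{\det A_n}=\sum_i\log s_i^{(n)}$. Thus \eqref{3.4} reads $\f{1}{N}\sum_{n,i}\log s_i^{(n)}\to 0$, while the trace identity above, together with $\tr(B_n^2)\ge 0$, forces $\limsup \f{1}{N\ell}\sum_{n,i}(s_i^{(n)})^2\le 1$. Reindexing $\{s_i^{(n)}\}$ as a single sequence and invoking Lemma~\ref{L2.1} yields $\f{1}{N\ell}\sum_{n,i}(s_i^{(n)}-1)^2\to 0$ and $\f{1}{N\ell}\sum_{n,i}(s_i^{(n)})^2\to 1$; inserting the latter back into the trace identity gives $\f{1}{N}\sum_{n=1}^N\tr(B_n^2)\to 0$.

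The remaining and hardest step is to pass from control of the singular values to control of $A_n-\bdone$ itself, which is where the type~1/type~3 hypothesis is essential (the singular values alone see only $\abs{A_n}$, not the polar/triangular factor). For type~1, $A_n$ is positive, so $s_i^{(n)}$ are its eigenvalues and $\norm{A_n-\bdone}_{\mathrm{HS}}^2=\sum_i(s_i^{(n)}-1)^2$; hence $\f{1}{N}\sum_n\norm{A_n-\bdone}_{\mathrm{HS}}^2=\ell\cdot\f{1}{N\ell}\sum_{n,i}(s_i^{(n)}-1)^2\to 0$ immediately. For type~3, $A_n$ is lower triangular with positive diagonal, so it is precisely the Cholesky factor of $A_nA_n^\dagger$, and $\norm{A_nA_n^\dagger-\bdone}_{\mathrm{HS}}^2=\sum_i(s_i^{(n)}-1)^2(s_i^{(n)}+1)^2$, which tends to $0$ in Cesàro mean since the $s_i^{(n)}$ are uniformly bounded ($J$ being a bounded operator as $\supp(d\mu)$ is compact). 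The Cholesky map $P\mapsto A_n$ is continuous with $A_n=\bdone\iff P=\bdone$, uniformly on $\{\norm{P}\le C\}$ away from the degenerate locus. I expect the main obstacle to be upgrading this to a Cesàro statement: I would split the indices into those with $\norm{A_nA_n^\dagger-\bdone}_{\mathrm{HS}}\le\delta$, handled by the uniform modulus of continuity of the Cholesky map, and the remainder, whose density tends to $0$ by Chebyshev from the $L^2$ bound just obtained and whose contribution is then controlled by the uniform bound on $\norm{A_n-\bdone}$. Either way one reaches $\f{1}{N}\sum_n\norm{A_n-\bdone}_{\mathrm{HS}}^2\to 0$ alongside $\f{1}{N}\sum_n\tr(B_n^2)\to 0$. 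A Cauchy--Schwarz estimate as in \eqref{1.9}--\eqref{1.10}, together with the equivalence of matrix norms in the fixed dimension $\ell$, then controls the Cesàro average on the left of \eqref{3.5} and gives \eqref{3.5}.
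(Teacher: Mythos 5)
Your proposal is correct (modulo a typo in the paper itself: the limit in \eqref{3.5} should be $0$, not $1$, and your argument, like the paper's, establishes $0$), but it takes a genuinely different route at the decisive step. The paper never introduces singular values: it applies Lemma~\ref{L2.1} directly to the scalar sequence of diagonal entries $(A_n)_{jj}$, which is legitimate for type~3 because triangularity gives $\abs{\det A_n}=\prod_j (A_n)_{jj}$ exactly, and for type~1 via Hadamard's inequality $\det A\leq \prod_j A_{jj}$ for $A\geq 0$, exploiting that Lemma~\ref{L2.1} needs only an inequality in \eqref{2.1}. The off-diagonal part of $A_n$ is then killed with no extra machinery: since $\tr(A_n^\dagger A_n)\geq \sum_j (A_n)_{jj}^2$ and the Ces\`aro averages of both sides are squeezed to $1$ by the trace identity \eqref{3.7} together with $\tr(B_n^\dagger B_n)\geq 0$, the Ces\`aro average of $\sum_{j\neq k}\abs{(A_n)_{jk}}^2$ tends to $0$, yielding $\f{1}{N}\sum_n \norm{A_n-\bdone}^2\to 0$ for both types at once. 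Your route---Lemma~\ref{L2.1} on the singular values, then type~1 immediately (singular values are eigenvalues), and type~3 by recovering $A_n$ as the Cholesky factor of $A_nA_n^\dagger$---is also sound: your Chebyshev splitting works because the bad set has vanishing density by the $L^2$ bound, $\norm{A_n-\bdone}$ is uniformly bounded since $\norm{A_n}\leq\norm{J}<\infty$, and the Cholesky map is uniformly continuous on the compact set $\{\norm{P-\bdone}\leq\delta_0\}$ with $\delta_0<1$, which lies inside the open positive-definite cone where the map is smooth (one should also note, as you implicitly do, that Ces\`aro statements along $N\ell$ upgrade to all indices by boundedness). What your version buys is conceptual clarity: it isolates exactly where the type~1/type~3 hypothesis enters, namely in pinning down the triangular or polar factor that the singular values alone cannot see. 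What the paper's version buys is economy: the diagonal-entry trick plus the trace squeeze disposes of both types in a few lines, with no continuity-of-factorization or density argument at all.
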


\begin{remark} \eqref{3.5} does not hold for all equivalent $\{\ti A_n,\ti B_n\}_{n=1}^\infty$,
but it is easy to see that
\begin{equation}\lb{3.6}
\f{1}{N} \sum_{n=1}^N \, \norm{A_n^* A_n-\bdone} + \norm{B_n} \to 0
\end{equation}
is equivalence class independent and implied by \eqref{3.5} for the type~1 or type~3
representative.
\end{remark}

\begin{proof} We consider type~3 first. By Thm.~5.2 of \cite{DPS2007}, the density of zeros
converges to the equilibrium measure, so analogously to \eqref{2.5},
\begin{equation}\lb{3.7}
\f{1}{N\ell} \biggl[ 2 \sum_{n=1}^{N-1} \tr (A_n^* A_n) + \sum_{n=1}^N
\tr (B_n^* B_n)\biggr] \to 2
\end{equation}

In the type~3 case, \eqref{3.4} says
\begin{equation}\lb{3.8}
\biggl[\, \prod_{n=1}^N \, \prod_{j=1}^\ell (A_n)_{jj}\biggr]^{1/N\ell} \to 1
\end{equation}
so as in the proof of Theorem~\ref{T1.1}, we find
\begin{equation}\lb{3.9}
\f{1}{N\ell} \sum_{n=1}^N\, \sum_{j=1}^\ell\, \abs{(A_n)_{jj}-1}^2 \to 0
\end{equation}
and then that
\begin{equation}\lb{3.10}
\f{1}{N} \sum_{n=1}^N \tr (B_n^* B_n) \to 0
\end{equation}
and
\begin{equation}\lb{3.11}
\f{1}{N} \sum_{n=1}^N \abs{\tr (A_n^* A-\bdone)} \to 0
\end{equation}

In the type~1 case, one uses the inequality
\begin{equation}\lb{3.12}
A\geq 0 \Rightarrow \det(A) \leq \prod_{j=1}^\ell A_{jj}
\end{equation}
(see Simon \cite[Cor.~8.10]{STI}) and the fact that Lemma~\ref{L2.1}
only requires an inequality in \eqref{2.1}.
\end{proof}

\section{OPUC} \lb{s4}

Here we will prove two results about OPUC. Recall $d\mu$ on $\partial\bbD$ with
$\sigma_\ess(d\mu)=\fre$ is called regular if and only if
\begin{equation} \lb{4.1}
\lim_{N\to\infty} \biggl(\, \prod_{j=0}^{N-1}\rho_j\biggr)^{1/N} = C(\fre)
\end{equation}
the capacity of $\fre$ where $\rho_j = (1-\abs{\alpha_j}^2)^{1/2}$ and
$\{\alpha_j\}_{j=0}^\infty$ are the Verblunsky coefficients.

\begin{theorem}\lb{T4.1} Let $d\mu$ be a measure of $\partial\bbD$ regular for $\fre=
\partial\bbD$. Then, as $N\to\infty$,
\begin{equation} \lb{4.2}
\f{1}{N} \sum_{j=0}^{N-1}\, \abs{\alpha_j} \to 0
\end{equation}
\end{theorem}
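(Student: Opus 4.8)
The plan is to mimic the OPRL argument of Theorem~\ref{T1.1} very closely, replacing the Jacobi-parameter trace identity with its Verblunsky-coefficient analog. The regularity condition \eqref{4.1} for $\fre=\partial\bbD$ becomes
\begin{equation*}
\lim_{N\to\infty}\biggl(\prod_{j=0}^{N-1}\rho_j\biggr)^{1/N}=C(\partial\bbD)=1,
\end{equation*}
since the unit circle has capacity one. Taking logarithms, this says $\f{1}{N}\sum_{j=0}^{N-1}\log\rho_j\to 0$. Because each $\rho_j\in(0,1]$ we have $\log\rho_j\le 0$, so the Ces\`aro average of the nonpositive quantities $-\log\rho_j$ tends to zero. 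My first step is therefore to convert this into control on $\f{1}{N}\sum_j |\alpha_j|^2$.

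The key elementary estimate is that $-\log\rho_j=-\tfrac12\log(1-|\alpha_j|^2)\ge\tfrac12|\alpha_j|^2$, which follows from $-\log(1-x)\ge x$ for $x\in[0,1)$ (the same convexity inequality $\log x\le x-1$ used in Lemma~\ref{L2.1}, here applied to $x=1-|\alpha_j|^2$). Hence
\begin{equation*}
\f{1}{N}\sum_{j=0}^{N-1}|\alpha_j|^2\le \f{2}{N}\sum_{j=0}^{N-1}(-\log\rho_j)=-\f{2}{N}\sum_{j=0}^{N-1}\log\rho_j\to 0.
\end{equation*}
So regularity alone forces the Ces\`aro average of $|\alpha_j|^2$ to vanish; no trace identity or density-of-zeros input is even needed here, which is a pleasant simplification over the real-line case. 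Note $|\alpha_j|<1$ automatically, so there is no analog of the boundedness hypothesis \eqref{1.8} to verify.

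The final step passes from the $\ell^2$-Ces\`aro statement to the $\ell^1$-Ces\`aro statement \eqref{4.2} by Cauchy--Schwarz, exactly as in \eqref{1.9}--\eqref{1.10}:
\begin{equation*}
\biggl(\f{1}{N}\sum_{j=0}^{N-1}|\alpha_j|\biggr)^2\le \f{1}{N}\sum_{j=0}^{N-1}|\alpha_j|^2\to 0,
\end{equation*}
which gives \eqref{4.2}. I do not anticipate a genuine obstacle: the whole argument is a two-inequality chain once regularity is written in logarithmic form. The one point requiring a little care is making sure the inequality $-\log(1-|\alpha_j|^2)\ge |\alpha_j|^2$ holds uniformly and with the correct constant, and that the convergence $\f1N\sum\log\rho_j\to0$ is used in the right direction (upper Ces\`aro bound on $-\log\rho_j$, which are all of one sign). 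Since each summand is nonnegative, there is no cancellation to worry about and the limit transfers cleanly.
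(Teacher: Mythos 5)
Your proof is correct. The paper reaches the same intermediate statement $\frac{1}{N}\sum_{j=0}^{N-1}|\alpha_j|^2=\frac{1}{N}\sum_{j=0}^{N-1}(1-\rho_j^2)\to 0$ and then finishes with the Schwarz inequality exactly as you do, but it gets there by feeding the sequence $\rho_j$ into Lemma~\ref{L2.1}, using regularity ($C(\partial\bbD)=1$) for hypothesis \eqref{2.1} and the trivial bound $\rho_j^2\le 1$ for hypothesis \eqref{2.2}. You instead apply the convexity inequality $\log y\le y-1$ directly to $y=\rho_j^2=1-|\alpha_j|^2$, which collapses the lemma to the single estimate $\frac{1}{N}\sum_{j=0}^{N-1}|\alpha_j|^2\le -\frac{2}{N}\sum_{j=0}^{N-1}\log\rho_j\to 0$. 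Since Lemma~\ref{L2.1} is itself essentially this convexity inequality plus bookkeeping, the two arguments are the same in substance; yours is marginally shorter and self-contained for this special case, while the paper's routing through the lemma lets the identical statement be reused for Theorem~\ref{T1.1} and Theorem~\ref{T3.1}. Your observation that no density-of-zeros or trace-identity input is needed when $\fre=\partial\bbD$ matches the paper, where the role of \eqref{2.2} is played by the trivial bound \eqref{4.3}. (One cosmetic slip: the quantities $-\log\rho_j$ are nonnegative, not nonpositive; your displayed inequalities use the correct sign.)
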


\begin{remark} This is the original CN class of \cite{KhGo}.
\end{remark}

\begin{proof} $C(\partial\bbD)=1$, so by Lemma~\ref{L2.1} and
\begin{equation} \lb{4.3}
\f{1}{N} \sum_{j=0}^{N-1} \rho_j^2 \leq 1
\end{equation}
we obtain
\begin{equation} \lb{4.4}
\f{1}{N} \sum_{j=0}^{N-1} (1-\rho_j^2)\to 0
\end{equation}
which implies \eqref{4.2} by the Schwarz inequality.
\end{proof}

For $a\in (0,1)$, let $\Gamma_a$ be the arc
\begin{equation} \lb{4.5}
\{z\in\partial\bbD\mid z=e^{i\theta},\, \pi\geq\abs{\theta} > 2\arcsin(a)\}
\end{equation}
which has capacity $a$. Then

\begin{theorem}\lb{T4.2} Let $d\mu$ be a measure on $\partial\bbD$, regular for $\fre=
\Gamma_a$. Then as $N\to\infty$,
\begin{alignat}{2}
& \text{\rm{(a)}} \qquad && \f{1}{N} \sum_{j=0}^{N-1}\, (\abs{\alpha_j}-a)^2 \to 0 \lb{4.6} \\
& \text{\rm{(b)}} \qquad && \f{1}{N} \sum_{j=0}^{N-1}\, \abs{\alpha_{j+1} -\alpha_j}^2 \to 0 \lb{4.7} \\
\intertext{For any $k$,}
& \text{\rm{(c)}} \qquad && \f{1}{N} \sum_{j=0}^{N-1} \min_\theta \biggl(\, \sum_{\ell=1}^k\,
\abs{\alpha_{j+\ell} -ae^{i\theta}}^2\biggr) \to 0 \lb{4.8}
\end{alignat}
\end{theorem}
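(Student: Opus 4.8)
The plan is to run exactly the strategy of Proposition~\ref{P2.2} and Theorem~\ref{T1.1}, with the free (constant) Verblunsky model $\alpha_n\equiv a$ playing the role that $a_n\equiv 1,\ b_n\equiv 0$ played there. Write $\rho_n=(1-\abs{\alpha_n}^2)^{1/2}$. The free model has essential support precisely $\Gamma_a$ and is regular for it, so (as in Proposition~\ref{P2.2}) I would evaluate all universal Ces\`aro limits on it; this fixes the two constants $(\prod_{j=0}^{N-1}\rho_j)^{1/N}\to\sqrt{1-a^2}$ and $\int_{\partial\bbD} z\,d\rho_{\Gamma_a}(z)=-a^2$, where $d\rho_{\Gamma_a}$ is the equilibrium measure. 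The engine of the whole theorem is the OPUC analogue of \eqref{2.5}: a density-of-states (trace) identity. For a measure regular for $\Gamma_a$ the eigenvalue distributions of the cutoff CMV matrices converge to $d\rho_{\Gamma_a}$ (the OPUC counterpart of the density-of-zeros input of Proposition~\ref{P2.2}); since the diagonal entries of $\calC$ are $\calC_{00}=\bar\alpha_0$ and $\calC_{nn}=-\bar\alpha_n\alpha_{n-1}$ for $n\ge 1$, taking traces and real parts gives the key identity
\[
\f1N\sum_{n=1}^N \Real(\bar\alpha_n\alpha_{n-1}) \to -\int_{\partial\bbD}\cos\theta\,d\rho_{\Gamma_a} = a^2 .
\]

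For part~(a) I would feed this into Lemma~\ref{L2.1}. From $\Real(\bar\alpha_n\alpha_{n-1})\le\tfrac12(\abs{\alpha_n}^2+\abs{\alpha_{n-1}}^2)$ and the key identity one gets $\liminf_N\f1N\sum_n\abs{\alpha_n}^2\ge a^2$, i.e.\ $\limsup_N\f1N\sum_n\rho_n^2\le 1-a^2$. Applying Lemma~\ref{L2.1} to $c_n=\rho_n/\sqrt{1-a^2}$, hypothesis~(i) holds because regularity gives $(\prod\rho_n)^{1/N}\to\sqrt{1-a^2}$, and (ii) is the bound just obtained. The conclusion $\f1N\sum(\rho_n-\sqrt{1-a^2})^2\to 0$, combined with $\abs{\,\abs{\alpha_n}-a\,}\le\tfrac2a\,\abs{\rho_n-\sqrt{1-a^2}}$ (since $\abs{\alpha_n}^2-a^2=(1-a^2)-\rho_n^2$), yields \eqref{4.6}, and also $\f1N\sum\abs{\alpha_n}^2\to a^2$ and $\f1N\sum\abs{\alpha_n}\to a$. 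It is worth noting that regularity alone, via concavity of the logarithm, controls $\f1N\sum\rho_n^2$ only from below, so it is the trace identity that supplies the crucial matching upper bound.

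Part~(b) is then immediate: expanding $\abs{\alpha_{j+1}-\alpha_j}^2=\abs{\alpha_{j+1}}^2+\abs{\alpha_j}^2-2\Real(\bar\alpha_{j+1}\alpha_j)$ and averaging, the three Ces\`aro sums tend to $a^2$, $a^2$, and $2a^2$ by part~(a) and the key identity, so \eqref{4.7} follows. For part~(c) I would minimize explicitly,
\[
\min_\theta\sum_{\ell=1}^k\abs{\alpha_{j+\ell}-ae^{i\theta}}^2
=\sum_{\ell=1}^k\abs{\alpha_{j+\ell}}^2+ka^2-2a\,\Bigl|\sum_{\ell=1}^k\alpha_{j+\ell}\Bigr| ,
\]
so that, averaging in $j$ and using $\f1N\sum_j\sum_\ell\abs{\alpha_{j+\ell}}^2\to ka^2$, statement \eqref{4.8} reduces to $\f1N\sum_j\bigl|\sum_{\ell=1}^k\alpha_{j+\ell}\bigr|\to ka$. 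The upper bound $\le ka+\oh(1)$ is the triangle inequality together with $\f1N\sum\abs{\alpha_n}\to a$; for the lower bound I would use $\bigl|\sum_{\ell=1}^k\alpha_{j+\ell}\bigr|\ge k\abs{\alpha_{j+1}}-\sum_{\ell=2}^k\abs{\alpha_{j+\ell}-\alpha_{j+1}}$ and observe that each $\abs{\alpha_{j+\ell}-\alpha_{j+1}}$ is a telescoping sum of at most $k$ consecutive differences, whose Ces\`aro mean vanishes by part~(b) and the Schwarz inequality.

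The main obstacle is the key identity, that is, the OPUC density-of-states input: that for measures regular for $\Gamma_a$ the eigenvalue distribution of the truncated CMV matrices converges to $d\rho_{\Gamma_a}$, and that its first moment is read off the Verblunsky coefficients through $\tr\calC$ (the low-rank corner correction needed to make the truncation unitary changes the trace by $O(1)$ and is killed by the factor $1/N$). Once this analogue of Proposition~\ref{P2.2} is in place, everything else is Lemma~\ref{L2.1} plus the elementary inequalities above, and the two universal constants are evaluated, as there, on the free model $\alpha_n\equiv a$ because the relevant limits depend only on $\Gamma_a$.
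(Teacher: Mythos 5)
Your proposal is correct and follows essentially the same route as the paper: the same trace/zero-distribution identity $\f1N\sum\bar\alpha_{n+1}\alpha_n\to a^2$ (with the constant evaluated on $\alpha_n\equiv a$), the same matching upper bound on $\f1N\sum\abs{\alpha_n}^2$ from regularity and concavity of $\log$, and the same algebraic expansions for (b) and (c). The only differences are cosmetic --- you obtain (a) by feeding $\rho_n/\sqrt{1-a^2}$ into Lemma~\ref{L2.1} where the paper uses a second-order Taylor expansion of $\log$ directly, and you spell out the deduction of \eqref{4.8} from \eqref{4.6}--\eqref{4.7}, which the paper asserts without proof.
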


\begin{remark} The isospectral torus for $\Gamma_a$ is exactly $\{\{\alpha_j\equiv ae^{i\theta}\}
\}_{\theta\in [0,2\pi)}$, that is, the constant sequence of Verblunsky coefficients, so (c) involves
an approach to an isospectral torus.
\end{remark}

\begin{proof} By regularity and the connection between zeros of POPUC and eigenvalues of
finite CMV matrices as defined in \cite{CMVrev}, one has that
\begin{equation} \lb{4.9}
\f{1}{N} \sum_{n=0}^{N-1} -\bar\alpha_{n+1} \alpha_n \to c
\end{equation}
where $c$ is the first moment of the equilibrium measure, that is, $\int z\, d\rho_{\Gamma_a}(z)$.
Specializing to the case $\alpha_n \equiv a$ to evaluate $c$, we see that
\begin{equation} \lb{4.10}
\f{1}{N} \sum_{n=0}^{N-1} \bar\alpha_{n+1} \alpha_n \to a^2
\end{equation}

On the other hand, by regularity,
\begin{equation} \lb{4.10a}
\f{1}{N} \sum_{n=0}^{N-1} \log (1-\abs{\alpha_n}^2) \to \log (1-\abs{a}^2)
\end{equation}
and by concavity of $\log$,
\[
\log (1-x) - \log (1-\abs{a}^2) \leq \f{1}{1-\abs{a}^2}\, (\abs{a}^2 -x)
\]
so
\begin{equation} \lb{4.11}
\liminf \f{1}{N} \sum_{n=0}^{N-1} \, (\abs{a}^2 - \abs{\alpha_n}^2) \geq 0
\end{equation}
and thus,
\begin{equation} \lb{4.12}
\limsup \f{1}{N} \sum_{n=0}^{N-1}\, \abs{\alpha_n}^2 \leq a^2
\end{equation}

By \eqref{4.10} and the Schwarz inequality,
\begin{equation} \lb{4.13}
\liminf \f{1}{N} \sum_{n=0}^{N-1} \, \abs{\alpha_n}^2 \geq a^2
\end{equation}
so
\begin{equation} \lb{4.14}
\f{1}{N} \sum_{n=0}^{N-1}\, \abs{\alpha_n}^2 \to a^2
\end{equation}

For $y\in (0,1]$ (by Taylor's theorem with remainder and $\max_{(0,1]} \f{d^2}{dy^2}
\log(y)=-1$),
\[
\log(y) -\log(1-\abs{a}^2) - \biggl[ \f{y-(1-\abs{a}^2)}{1-\abs{a}^2}\biggr]
\leq -\f12 \, (y-(1-\abs{a}^2))^2
\]
so \eqref{4.10a} and \eqref{4.14} imply
\[
\f{1}{N} \sum_{n=0}^{N-1}\, \abs{\abs{\alpha}^2 -a^2} \to 0
\]
which implies \eqref{4.7}.

\eqref{4.14} and \eqref{4.10} imply \eqref{4.7}. Finally, \eqref{4.6} and \eqref{4.7}
imply \eqref{4.8}.
\end{proof}

\section{The Nevai and CN Classes} \lb{s5}

In \cite{OPUC2}, I proposed using approach to an isospectral torus as a replacement for the Nevai
class when $[-2,2]$ is replaced by the spectrum of a periodic Jacobi matrix. This idea was then
implemented in Last--Simon \cite{S304} and Damanik--Killip--Simon \cite{DKS2007}. The latter
discussed extending this notion to a general finite gap set, and this idea was further developed
in Remling \cite{Remppt}.

$\fre$ will denote a finite gap set, that is,
\begin{equation} \lb{5.1a}
\fre=[\alpha_1,\beta_1] \cup [\alpha_2,\beta_2]\cup\cdots\cup
[\alpha_{\ell+1}, \beta_{\ell+1}] \subset\bbR
\end{equation}
where
\begin{equation} \lb{5.1b}
\alpha_1 < \beta_1 < \alpha_2 < \beta_2 < \cdots < \beta_{\ell+1}
\end{equation}
Given such a set, there is a natural torus, $\calT_\fre$, of almost periodic Jacobi matrices,
discussed, for example, in \cite{SY,CSZ1}; it can be described \cite{Remppt} as the
restriction to $\{1, \dots\}$ of the two-sided reflectionless Jacobi matrices, $J^\sharp$,
with
\begin{equation} \lb{5.1c}
\sigma (J^\sharp) =\fre
\end{equation}
All $J\in\calT_\fre$ have
\begin{equation} \lb{5.1d}
\sigma_\ess (J)=\fre
\end{equation}
$\calT_\fre$ is a torus in the uniform topology as well as the product topology.

Given a pair of bounded Jacobi parameters, $J=\{a_n,b_n\}_{n=1}^\infty$, $\ti J=
\{\ti a_n,\ti b_n\}_{n=1}^\infty$, define $d_m (J,\ti J)$ by
\begin{equation} \lb{5.1}
d_m (J,\ti J)=\sum_{k=0}^\infty e^{-\abs{k}} (\abs{a_{m+k} -\ti a_{m+k}} +
\abs{b_{m+k} -\ti b_{m+k}})
\end{equation}
If $\calT_\fre$ is an isospectral torus, let
\begin{equation} \lb{5.2}
d_m (J,\calT_\fre) = \inf_{\ti J \in \calT_\fre} d_m (J,\ti J)
\end{equation}

\begin{definition} If $\fre\subset\bbR$ is a finite gap set, we define the Nevai class
$N(\fre)$ to be those $J$'s with
\begin{equation} \lb{5.3}
\lim_{m\to\infty} d_m (J,\calT_\fre)=0
\end{equation}
This is equivalent (by compactness) to saying all the right limits of $J$ lie in $\calT_\fre$.
\end{definition}

It is a theorem of Last--Simon \cite{S304} that
\begin{equation} \lb{5.4}
J\in N(\fre)\Rightarrow \sigma_\ess (J)=\fre
\end{equation}
and of Remling \cite{Remppt} that
\begin{equation} \lb{5.5}
\sigma_\ess (J) = \sigma_\ac (J) = \fre \Rightarrow J\in N(\fre)
\end{equation}

It is not hard to see that
\[
J\in N(\fre) \Rightarrow J \text{ is regular for $\fre$}
\]

Analogously, we define the Ces\`aro--Nevai class, $CN(\fre)$, as those $J$ with
\begin{equation} \lb{5.6}
\f{1}{N} \sum_{m=1}^N d_m (J,\calT_\fre) \to 0
\end{equation}

A main conjecture we make in this note is:

\begin{conjecture}\lb{Con5.1} If $J$ is regular for $\fre$, that is, $\sigma_\ess(J)=\fre$,
and $(a_1 \dots a_N)^{1/N}\to C(\fre)$, then $J\in CN(\fre)$.
\end{conjecture}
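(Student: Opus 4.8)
The plan is to recast the Ces\`aro--Nevai condition \eqref{5.6} in terms of invariant measures on the hull of $J$ and then to run the potential-theoretic argument of Section~\ref{s2} at the level of ergodic averages. Since $\dist(\,\cdot\,,\calT_\fre)$ is bounded and continuous in the product topology, $\frac{1}{N}\sum_{m=1}^N d_m(J,\calT_\fre)\to 0$ is equivalent to the assertion that every weak-$*$ limit point $P$ of the empirical measures $\nu_N=\frac{1}{N}\sum_{m=1}^N\delta_{T^mJ}$, on the compact space of bounded two-sided Jacobi matrices with $T$ the shift, is supported on $\calT_\fre$ (viewed via \eqref{5.1c} as two-sided reflectionless operators). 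Each such $P$ is $T$-invariant, so by the ergodic decomposition it suffices to show that $P$-a.e.\ ergodic component is carried by $\calT_\fre$, i.e.\ that a.e.\ such operator is reflectionless on $\fre$ with spectrum $\fre$.

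First I would transfer the two hypotheses to $P$, paralleling \eqref{2.10} and Proposition~\ref{P2.2}. By Last--Simon \cite{S304}, $\sigma_\ess(J)=\fre$ forces every right limit, hence $P$-a.e.\ operator, to have $\spec\subseteq\fre$. The regularity input $(a_1\cdots a_N)^{1/N}\to C(\fre)$ says $\frac{1}{N}\sum_{n=1}^N\log a_n\to\log C(\fre)$, which should pass to $\int\log a\,dP=\log C(\fre)$; and convergence of the density of zeros to $d\rho_\fre$ (the tool already used in Proposition~\ref{P2.2}) should identify the density of states of $P$ with $d\rho_\fre$. These are the averaged constraints that replace (i) and (ii) of Lemma~\ref{L2.1}.

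For an ergodic component $P_\omega$, write $d\rho_\omega$ for its density of states (a measure on $\fre$) and $\gamma_\omega$ for its Lyapunov exponent; the Thouless formula gives $\int\log a\,dP_\omega=\int\log\abs{z-x}\,d\rho_\omega(x)-\gamma_\omega(z)$. Since $\gamma_\omega$ dominates the Green's function of the complement of the spectrum of $P_\omega$, which in turn dominates $G_\fre$ because $\spec\subseteq\fre$, comparison of the $O(1)$ terms at infinity gives $\int\log a\,dP_\omega\le\log C(\fre)$. As the average over $\omega$ equals $\log C(\fre)$, equality holds for a.e.\ component; the equality case of the potential-theoretic bound then forces $d\rho_\omega=d\rho_\fre$ and $\gamma_\omega\equiv G_\fre$, so that $\gamma_\omega=0$ on $\fre$. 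Kotani theory converts the vanishing of the Lyapunov exponent on $\fre$ into reflectionlessness there, and together with $\spec\subseteq\fre$ this places $P_\omega$-a.e.\ operator in $\calT_\fre$, completing the reduction.

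The formal scheme is clean; the difficulties are analytic. Passing from $\frac{1}{N}\sum_{n=1}^N\log a_n\to\log C(\fre)$ to $\int\log a\,dP=\log C(\fre)$ is delicate because regularity does not keep $a_n$ bounded away from $0$, so $\log a$ is only upper semicontinuous and may fail to be $P$-integrable on an exceptional set of components; controlling this is the first real hurdle. The second, and I expect the crux, is to make the ergodic Thouless formula and the bound $\gamma_\omega\ge G_\fre$ rigorous here and, above all, to analyze the equality case --- ruling out a proper closed sub-spectrum of $\fre$ of equal capacity and deducing $d\rho_\omega=d\rho_\fre$ before feeding $\gamma_\omega=0$ into Kotani theory. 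It is precisely this ingredient that the periodic case of Section~\ref{s6} sidesteps by using the discriminant and the magic formula to linearize $\calT_\fre$; the absence of such a polynomial for a general finite gap $\fre$ is what leaves the conjecture open.
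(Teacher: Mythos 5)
The statement you are addressing is stated in the paper only as Conjecture~\ref{Con5.1}; the paper contains no proof of it, so there is no argument to compare yours against directly. What the paper does prove is the special case Theorem~\ref{T6.1} ($\fre$ the essential spectrum of a periodic $J_0$ with all gaps open), and the route there is entirely different from yours: apply the discriminant $\Delta_{J_0}$ to $J$, check that regularity of $J$ for $\fre$ makes the type~3 block Jacobi matrix $\Delta_{J_0}(J)$ regular for $[-2,2]$, invoke the matrix Ces\`aro--Nevai result (Theorem~\ref{T3.1}), and translate back to $d_k(J,\calT_\fre)$ by the Ces\`aro version of the Damanik--Killip--Simon comparison (Theorem~\ref{T6.3}). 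That argument is a trace identity plus the scalar Lemma~\ref{L2.1}; it uses no ergodic decomposition, Thouless formula, or Kotani theory, and it is exactly the ``magic formula'' shortcut whose absence for general $\fre$ you correctly identify as the reason the conjecture is left open.

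Judged as a program for the general case, your outline is the natural one, but it is a program, not a proof, and you say as much: the load-bearing steps are flagged with ``should.'' Concretely, (i) the Thouless formula and the componentwise bound $\int\log a\,dP_\omega\le\log C(\fre)$ must be established for ergodic components of a measure that is merely a weak-$*$ limit of empirical measures, where $\log a$ need not be integrable; (ii) the equality-case analysis (potential of $d\rho_\omega$ versus the Robin constant, equality only for the equilibrium measure) has to be carried out with $\supp(d\rho_\omega)$ only known to lie in $\fre$; and (iii) Kotani theory gives reflectionlessness on $\fre$, while membership in the paper's $\calT_\fre$ also requires $\sigma(J^\sharp)=\fre$ exactly --- though this does follow once $d\rho_\omega=d\rho_\fre$, that step needs to be said. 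One point is easier than you fear: since $\log a$ is upper semicontinuous and bounded above on the compact space of limit matrices, regularity gives $\int\log a\,dP\ge\log C(\fre)$ directly by the portmanteau inequality, which is precisely the direction needed against the potential-theoretic upper bound, so the ``first hurdle'' largely takes care of itself. As written, the proposal neither reproduces the paper's partial result nor closes the conjecture.
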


In the next section, we will prove this for a special class of $\fre$'s. Of course, we make a
similar conjecture for finite gap OPUC. Indeed, Theorem~\ref{T4.2} is the case of OPUC with
one gap!

\section{Generic Periodic Spectrum} \lb{s6}

Our goal is to prove:

\begin{theorem}\lb{T6.1} Let $\fre$ be a finite gap set so that each $[\alpha_j,\beta_j]$ has harmonic
measure $(\ell+1)^{-1}$ {\rm{(}}equivalently, there is a $J_0$ with period $\ell+1$ so $\fre=\sigma_\ess
(J_0)${\rm{)}}. Let $J$ be a Jacobi matrix with regular spectral measure so that $\sigma_\ess (J)=\fre$.
Then $J\in CN(\fre)$.
\end{theorem}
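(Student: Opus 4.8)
The plan is to reduce the periodic finite-gap problem to the matrix (block) case on $[-2,2]$ handled in Theorem~\ref{T3.1}, exploiting the well-known correspondence between period-$(\ell+1)$ Jacobi matrices and matrix-valued OPRL. The key observation is that when $\fre=\sigma_\ess(J_0)$ for a period-$p$ matrix $J_0$ with $p=\ell+1$, there is a polynomial map $\Delta$ (the discriminant of the period) that sends $\fre$ onto $[-2,2]$, and that passing to blocks of size $p$ turns a scalar Jacobi matrix into a block Jacobi matrix whose essential spectrum, after applying $\Delta$, is exactly $[-2,2]$. This is precisely the machinery developed in Damanik--Killip--Simon \cite{DKS2007}, which the paper has already announced it will use.

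First I would group the scalar Jacobi parameters $\{a_n,b_n\}$ into $p\times p$ blocks and form the associated block Jacobi matrix $\calJ$ for the $p\times p$ matrix measure obtained by the standard $p$-to-$1$ construction; equivalently, I would consider the matrix-valued measure whose block Jacobi parameters encode $\Delta(J)$. The point is that regularity is preserved under this correspondence: the hypothesis $(a_1\cdots a_N)^{1/N}\to C(\fre)$ translates, via the relation between $C(\fre)$, the capacity $C([-2,2])=1$, and the degree-$p$ map $\Delta$, into the regularity condition \eqref{3.4} for the block matrix $\Delta(\calJ)$, while $\sigma_\ess(J)=\fre$ becomes $\sigma_\ess(\Delta(\calJ))=[-2,2]$. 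Here I would lean on \cite{DKS2007} for the precise dictionary (harmonic measure $1/p$ of each band is exactly what guarantees all gaps are open and the discriminant is genuinely degree $p$ with the clean covering structure).

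Next I would apply Theorem~\ref{T3.1} to the block matrix $\Delta(\calJ)$ to conclude the Ces\`aro convergence \eqref{3.6}, namely
\[
\f{1}{N} \sum_{n=1}^N \norm{\calA_n^* \calA_n -\bdone} + \norm{\calB_n} \to 0,
\]
where $\calA_n,\calB_n$ are the block Jacobi parameters of $\Delta(\calJ)$. The final and most delicate step is to transfer this block statement back to the statement that $\f1N\sum_{m=1}^N d_m(J,\calT_\fre)\to 0$. The isospectral torus $\calT_\fre$ consists exactly of those $J$ whose block-$\Delta$ image has $\calA_n^*\calA_n=\bdone$ and $\calB_n=0$ (the free block matrix up to the appropriate unitary equivalence of \eqref{3.2}); so the block quantities controlled above measure, on average, the distance of the blocks of $J$ from $\calT_\fre$. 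I would argue that smallness of the averaged block-distance forces smallness of the averaged $d_m(J,\calT_\fre)$ by a compactness/continuity argument: the map from block parameters near the free values to the nearest point of $\calT_\fre$ is continuous, $\fre$-dependent but uniform on the bounded region where $J$ lives (using \eqref{1.8}), and the exponential weights in \eqref{5.1} make $d_m$ a uniformly continuous function of finitely many nearby parameters.

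The main obstacle I anticipate is exactly this last transfer. The quantity in \eqref{3.6} controls $\calA_n^*\calA_n$ rather than $\calA_n$ itself, and it is equivalence-class (gauge) invariant, whereas $d_m(J,\calT_\fre)$ is a genuine metric distance in the fixed scalar parametrization; so I must show that the averaged gauge-invariant block smallness implies the scalar sequence is, on average, close to \emph{some} element of the torus after choosing the right gauge block-by-block. This requires knowing that the $\Delta$-to-$J$ inverse map is well-behaved near $\calT_\fre$ and that the band-to-block dictionary of \cite{DKS2007} respects the metric $d_m$ up to uniform constants; establishing this Lipschitz/uniform-continuity comparison, and verifying that the hypothesis of all bands having equal harmonic measure $1/p$ is what makes the dictionary clean enough for the argument to close, is where the real work lies.
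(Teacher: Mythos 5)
Your proposal follows essentially the same route as the paper: pass to the $p\times p$ block Jacobi matrix $\Delta_{J_0}(J)$, check that regularity of $J$ for $\fre$ (via $a_{0,j}\cdots a_{0,j+p}=C(\fre)^p$) makes $\Delta(J)$ regular for $[-2,2]$, apply Theorem~\ref{T3.1}, and transfer back to $d_m(J,\calT_\fre)$. The ``delicate last step'' you flag is exactly what the paper handles by invoking the local comparison of Damanik--Killip--Simon (Theorem~\ref{T6.2}), whose proof adapts verbatim to the Ces\`aro-averaged version (Theorem~\ref{T6.3}), so your plan is sound and matches the paper's argument.
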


We use $p$ for $\ell+1$, the period of $J_0$.

Following \cite{DKS2007}, we exploit $\Delta_{J_0}(J)$ where $\Delta_{J_0}$ is the discriminant
\cite{DKS2007,Rice} of $J_0$, a polynomial of degree $p$. If $J$ is any Jacobi matrix, $\Delta_{J_0}(J)$
is a $p\times p$ block Jacobi matrix of type~3. We use $A_{J_0,k}(J)$ and $B_{J_0,k}(J)$ to denote
the $p\times p$ matrix blocks in $\Delta(J)$.

\cite{DKS2007} proved the following theorem (their Thm.~11.12); here $\norm{\cdot}$ is the
Hilbert--Schmidt norm.

\begin{theorem}[\cite{DKS2007}]\lb{T6.2} Fix $J_0$ periodic with $\sigma_\ess (J_0)=\fre$ and
$J$ an arbitrary bounded Jacobi matrix. Then
\begin{equation} \lb{6.1}
\sum_{k=1}^\infty\, \norm{A_{J_0,k}(J)-\bdone}_2^2 + \norm{B_{J_0,k}(J)}_2^2 <\infty
\end{equation}
if and only if
\begin{equation} \lb{6.2}
\sum_{k=1}^\infty d_k (J,\calT_\fre)^2 <\infty
\end{equation}
\end{theorem}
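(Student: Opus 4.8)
The plan is to realize the polynomial map $J\mapsto \Delta_{J_0}(J)$ as a \emph{finite-range} map on the level of Jacobi parameters and to show it is, after quotienting by $\calT_\fre$, a uniform bi-Lipschitz chart whose fiber over the free block matrix is exactly $\calT_\fre$. Since $\Delta_{J_0}$ has degree $p$, the operator $\Delta_{J_0}(J)$ is banded with bandwidth $p$, so after the period-$p$ blocking the entries of the $k$-th block pair $(A_{J_0,k}(J),B_{J_0,k}(J))$ are polynomials in the finitely many parameters $\{a_n,b_n\}$ with $n$ in a window $W_k$ of width $O(p)$ centered near $kp$; moreover these polynomials have bounded derivatives on the ball $\{\abs{a_n}+\abs{b_n}\le R\}$, where $R$ bounds the (arbitrary, but bounded) matrix $J$. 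First I would record two structural facts. (i) $\Delta_{J_0}$ carries the two-sided reflectionless Jacobi matrices with spectrum $\fre$ to the two-sided reflectionless Jacobi matrix with spectrum $[-2,2]$, which by the Killip--Simon characterization \cite{KS} is the free matrix $a\equiv 1$, $b\equiv 0$; hence $\ti J\in\calT_\fre$ forces $A_{J_0,k}(\ti J)=\bdone$ and $B_{J_0,k}(\ti J)=0$. (ii) Conversely, by the magic formula of \cite{DKS2007}, this is the \emph{only} way to hit the free block matrix, i.e. the fiber of the block map over $(\bdone,0)$ is precisely $\calT_\fre$. (Boundary effects on the half-line alter only finitely many blocks and so are harmless for a summability statement.)

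With these in hand the equivalence reduces to two windowed estimates plus bookkeeping. For \eqref{6.2}$\Rightarrow$\eqref{6.1} I would, for each $k$, pick $\ti J\in\calT_\fre$ nearly realizing the infimum in $d_{kp}(J,\calT_\fre)$; since the block map is $C^1$ and vanishes on $\calT_\fre$ by (i), Taylor's theorem gives
\[
\norm{A_{J_0,k}(J)-\bdone}_2^2 + \norm{B_{J_0,k}(J)}_2^2 \le C \sum_{n\in W_k}\bigl(\abs{a_n-\ti a_n}^2 + \abs{b_n-\ti b_n}^2\bigr),
\]
and because $W_k$ has bounded width the unweighted right side is dominated by the exponentially weighted $d_{kp}(J,\calT_\fre)^2$. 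The reverse \eqref{6.1}$\Rightarrow$\eqref{6.2} is the substantive direction: I would prove the matching lower bound
\[
d_{kp}(J,\calT_\fre)^2 \le C \sum_{\abs{j-k}\le m_0}\Bigl(\norm{A_{J_0,j}(J)-\bdone}_2^2 + \norm{B_{J_0,j}(J)}_2^2\Bigr),
\]
which says exactly that smallness of the blocks throughout a window forces $J$ to lie close to $\calT_\fre$ near $kp$.

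Finally I would sum. Both estimates are discrete convolutions of one sequence against a kernel of finite range (or exponential decay) relating the site index $n\approx jp$ to the block index $j$; by the Schur test / Young's inequality the $\ell^1$-finiteness of the kernel turns the pointwise window bounds into the two implications $\sum_k(\cdots)<\infty\Leftrightarrow\sum_k d_{kp}^2<\infty$. The remaining gap between $\sum_k d_{kp}^2$ and $\sum_m d_m^2$ is closed using the exponential weights in \eqref{5.1}, which make $d_m$ comparable across any shift of length $\le p$, so that sampling every $p$-th site loses only a $p$-dependent constant.

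The main obstacle is the uniform lower bound, equivalently the claim that the differential of $J\mapsto\Delta_{J_0}(J)$ is bounded below on directions transverse to $\calT_\fre$, uniformly along the whole torus. The map is genuinely non-injective: its fiber $\calT_\fre$ is $\ell$-dimensional, so one cannot simply invert it. The work is to identify the tangent space of $\calT_\fre$ with the kernel of the differential and to show that on a complementary subspace the differential is invertible with a uniformly bounded inverse. Compactness of $\calT_\fre$ in the uniform topology makes such uniformity plausible, but verifying that $\calT_\fre$ is \emph{exactly} the zero set and that the transverse part of the differential is nondegenerate \emph{at every point} of the torus is where the detailed period-$p$ structure of the discriminant $\Delta_{J_0}$ must be used, and it is the crux of the argument.
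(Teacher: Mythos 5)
A preliminary remark: the paper itself offers no proof of Theorem~\ref{T6.2} --- it is imported verbatim from \cite{DKS2007} (their Thm.~11.12) --- so your attempt has to be measured against the argument in that source. Your skeleton does match it: the ``magic formula'' identifying $\calT_\fre$ as exactly the set where $\Delta_{J_0}$ takes the free block value $S^p+S^{-p}$, the easy direction \eqref{6.2}$\Rightarrow$\eqref{6.1} via the block map being $C^1$, vanishing on the torus, and depending only on a window of width $O(p)$, and the sampling bookkeeping using the exponential weights in \eqref{5.1} (one small slip: $W_k$ extends to sites below $kp$ while $d_m$ in \eqref{5.1} only sums forward, so you should compare against $d_{(k-1)p}$ rather than $d_{kp}$; this is harmless).

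The genuine gap is precisely the step you flag in your last paragraph but do not carry out: the uniform local lower bound $d_{kp}(J,\calT_\fre)^2 \leq C\sum_{\abs{j-k}\leq m_0}(\norm{A_{J_0,j}(J)-\bdone}_2^2 + \norm{B_{J_0,j}(J)}_2^2)$. Nothing you have put in place delivers it. Knowing that the zero set of the block map is exactly $\calT_\fre$ and invoking compactness of the torus gives, by a contradiction argument, only the qualitative statement that for every $\veps>0$ there is $\delta>0$ so that block norms below $\delta$ on a window force $d_{kp}<\veps$. That is useless for an $\ell^2$ summability equivalence: it does not rule out the map vanishing to \emph{second} order transversally somewhere on the torus, in which case $d_{kp}^2$ could be far larger than the squared block norms and \eqref{6.1} would fail to imply \eqref{6.2}. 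What must actually be proved --- and what constitutes the substance of the proof in \cite{DKS2007} --- is a first-order nondegeneracy statement: at every point of $\calT_\fre$ the differential of $J\mapsto(A_{J_0,k}(J),B_{J_0,k}(J))$ has kernel exactly the tangent space of the torus and admits a transverse inverse bounded uniformly over the torus, converted into the windowed Lipschitz comparison by an implicit-function-type argument; \cite{DKS2007} extract this from the explicit entry structure of $\Delta(J)$ (e.g., the extreme band consisting of products $a_j\cdots a_{j+p-1}/C(\fre)^p$). Your proposal correctly locates this as the crux but leaves it as a plausibility claim (``compactness makes such uniformity plausible''), so the hard direction \eqref{6.1}$\Rightarrow$\eqref{6.2} is not established; the rest of your plan is correct bookkeeping around that missing core.
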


Because this comparison is local, the exact same proof shows

\begin{theorem}\lb{T6.3} Let $J_0$ be periodic with $\sigma_\ess (J_0)=\fre$ and $J$ an arbitrary
Jacobi matrix. Then
\begin{equation} \lb{6.3}
\lim_{N\to\infty} \f{1}{N} \sum_{k=1}^N\, [\norm{A_{J_0,k}(J)-\bdone}_2^2 +
\norm{B_{J_0,k}(J)}^2] =0
\end{equation}
if and only if
\begin{equation} \lb{6.4}
\lim_{N\to\infty} \f{1}{N} \sum_{k=1}^N d_k (J,\calT_\fre)^2 \to 0
\end{equation}
\end{theorem}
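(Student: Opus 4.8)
The plan is to deduce Theorem~\ref{T6.3} from Theorem~\ref{T6.2} by observing that the proof of the latter in \cite{DKS2007} is not merely an equivalence of two infinite sums but is assembled from \emph{finite-range, pointwise} comparison estimates between the $k$-th discriminant block and a bounded window of the Jacobi parameters of $J$. Once these local estimates are isolated, the passage from summability (Theorem~\ref{T6.2}) to Ces\`aro summability (Theorem~\ref{T6.3}) is automatic, since a finite-range bound is preserved under averaging. This is precisely the content of the remark ``because this comparison is local.''

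First I would record the locality of the discriminant map. Since $\Delta_{J_0}$ is a fixed polynomial of degree $p$, each entry of $\Delta_{J_0}(J)$, hence each block $A_{J_0,k}(J)$ and $B_{J_0,k}(J)$, is a fixed polynomial in the finitely many parameters $\{a_n,b_n\}$ with $\abs{n-kp}\le R$ for a range $R$ depending only on $p$. These block-valued functions equal their free values $(\bdone,0)$ exactly when the corresponding window of $J$ agrees with an element of $\calT_\fre$, since $\Delta_{J_0}$ collapses $\calT_\fre$ to the free block Jacobi matrix, the structural fact underlying \cite{DKS2007}. Using the a priori bound \eqref{1.8}, so that $J$ ranges over a fixed compact set of parameters, together with the smoothness and non-degeneracy of this comparison map along $\calT_\fre$, I would extract two-sided Lipschitz-type bounds.

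These bounds take the form: there are constants $C_1,C_2$ and a range $R$, uniform in $k$, with
\begin{equation}
\norm{A_{J_0,k}(J)-\bdone}_2^2 + \norm{B_{J_0,k}(J)}_2^2 \le C_1 \sum_{\abs{m-kp}\le R} d_m(J,\calT_\fre)^2
\end{equation}
and, conversely,
\begin{equation}
d_{kp}(J,\calT_\fre)^2 \le C_2 \sum_{\abs{j-k}\le R} \bigl[\norm{A_{J_0,j}(J)-\bdone}_2^2 + \norm{B_{J_0,j}(J)}_2^2\bigr].
\end{equation}
Summing either one over the appropriate range recovers Theorem~\ref{T6.2}. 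The exponential weights in the definition \eqref{5.1} of $d_m$ make $d_m$ itself essentially local, so the second estimate can be promoted from the single site $kp$ to every site $m$ in the $k$-th period. Averaging the first estimate over $k=1,\dots,N$ and noting each $d_m^2$ is counted at most $2R+1$ times gives $\f{1}{N}\sum_{k=1}^N[\cdots]\le C_1(2R+1)p\cdot\f{1}{Np}\sum_{m=1}^{Np+R} d_m^2$, so \eqref{6.4} implies \eqref{6.3}; the reverse implication follows symmetrically, the factor $p$ arising from the block-to-site reindexing and the $+R$ boundary terms being negligible after division by $N$.

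The main obstacle is the first step: one must verify that the argument of \cite{DKS2007} genuinely produces \emph{uniform} finite-range two-sided bounds, rather than only a global trace/sum-rule equivalence. Concretely, I need the non-degeneracy of the comparison map to be uniform over the torus, and the Hilbert--Schmidt block norms to control, and be controlled by, the distance to $\calT_\fre$ with constants independent of $k$. Once this locality and uniformity are confirmed, the Ces\`aro statement is, as claimed, the ``exact same proof.''
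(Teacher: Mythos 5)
Your proposal is correct and follows essentially the same route as the paper, whose entire proof is the remark that ``because this comparison is local, the exact same proof shows'' the Ces\`aro version of Theorem~\ref{T6.2}. You have simply made explicit the finite-range two-sided estimates implicit in that remark and verified that they survive averaging, which is exactly the intended argument.
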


With this and Theorem~\ref{T3.1}, we can prove Theorem~\ref{T6.1}.

\begin{proof}[Proof of Theorem~\ref{T6.1}] $\Delta(x)$ has the form
\[
\Delta(x) = (a_{0,1}\, a_{0,2} \dots a_{0,p})^{-1} x^p + \text{lower order}
\]
so the diagonal matrix elements of $\Delta(J)$ are
\[
\f{a_j a_{j+1} \dots a_{j+p}}{a_{0,j} \dots a_{0,j+p}} \equiv \alpha_{jj}
\]

If $J$ is regular, for $\fre$,
\begin{equation} \lb{6.5}
\biggl[ \f{a_1\dots a_n}{C(\fre)^n}\biggr]^{1/n} \to 1
\end{equation}
But $a_{0,j}\dots a_{0,j+p} = C(\fre)^p$ for periodic Jacobi matrices, so \eqref{6.5} implies
\[
(\alpha_{11} \alpha_{22} \dots \alpha_{nn})^{1/n} \to 1
\]
which implies that $\Delta(J)$ is a regular block Jacobi matrix.

By Theorem~\ref{T3.1}, \eqref{6.3} holds and so, by Theorem~\ref{T6.3}, we have the $CN(\fre)$
condition \eqref{6.4}.
\end{proof}

\bigskip

\end{document}